\title{\LARGE \bf A stochastic density matrix approach  to approximation of  probability distributions and its application to nonlinear systems$^*$}
\author{Igor G. Vladimirov$^{\dagger}$
\thanks{$^*$This work is supported by the Australian Research Council.}
\thanks{$^\dagger$UNSW Canberra, Australia.
{\tt igor.g.vladimirov@gmail.com}.}
}
\newtheorem{lem}{Lemma}
\newtheorem{theorem}{Theorem}
\def\fS{\mathfrak{S}}         
\def\Span{\mathrm{span}}         
\def\rank{\mathrm{rank}}         
\def\div{\mathrm{div}}         
\def\<{\leqslant}           
\def\>{\geqslant}           
\def\mZ{\mathbb{Z}}    
\def\mR{\mathbb{R}}    
\def\mC{\mathbb{C}}    
\def\mH{\mathbb{H}}    
\def\cH{\mathcal{H}}    
\def\cQ{\mathcal{Q}}    
\def\Tr{\mathrm{Tr\,}}   
\def\rT{\mathrm{T}}    
\def\vec{\mathrm{vec}}    
\def\bE{\mathbf{E}}    
\def\bra{\langle}
\def\ket{\rangle}
\def\Bra{\left\langle }
\def\Ket{\right\rangle }
\def\re{\mathrm{e}}    
\def\rd{\mathrm{d}}    
\def\d{\partial}    
\def\rB{\mathrm{B}}    
\def\cL{\mathcal{L}}
\def\bR{\mathbf{R}}
\def\diag{\mathop{\mathrm{diag}}}    
\def\x{\times}
\def\ox{\otimes}
\def\b1{\mathbf{1}}
\def\cG{{\mathcal G}}
\def\cB{\mathcal{B}}
\def\cE{\mathcal{E}}
\def\cA{\mathcal{A}}
\def\mT{\mathbb{T}}
\def\mZ{\mathbb{Z}}
\begin{document}
\maketitle

\thispagestyle{empty}
\pagestyle{plain}

\begin{abstract}
This paper outlines an approach to the approximation of probability density functions by quadratic forms of weighted orthonormal basis functions with positive semi-definite Hermitian matrices of unit trace.  Such matrices are called stochastic density matrices in order to reflect an analogy with the quantum mechanical density matrices. The SDM approximation of a PDF satisfies the normalization condition and is nonnegative everywhere  in contrast to the truncated Gram-Charlier and Edgeworth expansions. For bases with an algebraic structure, such as  the Hermite polynomial and Fourier bases, the SDM approximation can be chosen so as to satisfy given moment specifications and can be optimized using a quadratic proximity criterion. We apply the SDM approach to the Fokker-Planck-Kolmogorov PDF dynamics of Markov diffusion processes governed by nonlinear stochastic differential equations. This leads to an ordinary differential equation for the SDM dynamics of the approximating PDF. As an example, we consider the Smoluchowski SDE on a multidimensional torus.
\end{abstract}

\section{INTRODUCTION}\label{sec:intro}

Practical solvability of performance analysis and control design problems for stochastic systems often depends on tractability of relevant quantities,  such as moments of the system variables. For example, the Kalman filtering and Linear Quadratic Gaussian control theories  \cite{AM_1989} substantially employ the preservation of Gaussian nature of probability distributions of the state variables governed by linear SDEs.
Under the linear dynamics,  the first and second order moments of the variables (and more complicated functionals of Gaussian distributions) are amenable to a complete analysis. It is the convenience of linear Gaussian models that makes  them so popular in filtering/control with quadratic  and related (for example, risk-sensitive) performance criteria. These advantages motivate the  approximation of a nonlinear stochastic system by an effective linear model which underlies the stochastic linearization techniques. The latter date back to \cite{B_1953,C_1963,K_1956}
and have recently been extended to quantum stochastic systems \cite{VP_2012a}.

A different approach to computing  the statistical characteristics of a nonlinear stochastic system (oriented at approximating probability distributions rather than system dynamics) consists, for example,  in representing the probability distribution of its state variables as a mixture of Gaussian distributions whose parameters evolve in time. In fact, mixed Gaussian distributions arise as exact posterior probability distributions in the Lainiotis multimodel filter \cite{L_1975}, where the conditional Gaussian distributions from partial Kalman filters are weighted by recursively updated posterior  probabilities of the corresponding linear models conditioned on the observations. This combination of a bank of Kalman filters with a ``mixing'' block is a recursive implementation of  the Bayesian approach. The important property that the resulting mixture of Gaussian PDFs 
is a legitimate PDF, which is nonnegative everywhere  and satisfies the normalization condition, does not always come with  PDF approximations, in general. For example, the truncated Gram-Charlier and Edgeworth expansions \cite{BC_1989}, based on the Hermite polynomials,  are not equipped with this feature, although they provide control over moments or cumulants up to an arbitrary given order.

The aim of the present paper is to outline an approach to the approximation of PDFs by using quadratic forms of weighted    complex-valued orthonormal basis functions with positive semi-definite Hermitian matrices of unit trace.  These matrices are called stochastic density matrices (SDM) in order to emphasize an analogy (and, at the same time, avoid confusion) with the quantum mechanical density matrices \cite{S_1994}. The SDM approximation leads to a legitimate PDF which is nonnegative everywhere and satisfies the normalization condition. Furthermore, it retains the possibility to control the moments of the PDF for orthonormal bases with an algebraic structure, such as the Hermite polynomial and Fourier bases. The SDM approximation can be optimized by using a proximity criterion for PDFs based on the second-order relative Renyi entropy \cite{R_1961}, which leads to a quadratic minimization problem.

This allows the SDM approach to be applied to PDFs of Markov diffusion processes,  governed by nonlinear SDEs, by reducing the approximate numerical integration of the Fokker-Planck-Kolmogorov equation (FPKE) \cite{S_2008} to the solution of an 
ODE for the SDM, which resembles the Galerkin approximations for parabolic PDEs \cite{E_1998}.
As an illustration, we consider a Smoluchowski SDE \cite{GS_2004,KS_1991,Z_2001} on a multidimensional torus, which provides  an example of a nonlinear stochastic system with rotational degrees of freedom.
The SDM approach admits a real  version in the case of real-valued basis functions, with an appropriate reformulation of the results. It is relevant to mention a connection of this approach with the methods using the sum of squares (SOS) of polynomials for Lyapunov stability analysis and global optimization \cite{L_2001,P_2000}. However,  the SDM approach, proposed in the present paper,  serves a different purpose here and 
is not restricted to polynomials.

The paper is organised as follows. Section~\ref{sec:SDM} describes the class of PDFs generated by an SDM and a set of orthonormal functions. Section~\ref{sec:alg} relates the algebraic structure of the orthonormal basis to the moments of such PDFs. Section~\ref{sec:eff_par} discusses effective parameters of the SDM which control the PDF. Section~\ref{sec:Herm} specifies this class of PDFs for the multivariate Hermite polynomial and Fourier bases. Section~\ref{sec:SDMapp} describes the SDM approximation of a given PDF using a quadratic criterion. Section~\ref{sec:SDMdyn} extends the SDM approximation to PDF dynamics of Markov processes. Section~\ref{sec:SSDE} considers the Smoluchowski SDE on a multidimensional torus.  Section~\ref{sec:spat} reformulates the corresponding FPKE in the spatial frequency domain. Section~\ref{sec:smolSDM} describes the SDM approximation of the PDF dynamics using the Fourier basis and provides numerical results. Section~\ref{sec:conc} makes concluding remarks.

\section{STOCHASTIC DENSITY MATRIX}\label{sec:SDM}

Suppose $G$ is a finite-dimensional state space of a dynamical system. To be specific, we assume that $G$ is a domain in $\mR^n$, or an $n$-dimensional torus $\mT^n$. In what follows, we use a complex Hilbert space
\begin{align}
    \cH  := \cL^2(G,\nu)
\label{cH}
       := \Big\{\!f:G \to \mC: \int_G |f(x)|^2\nu(x)\rd x<+\infty\!\Big\}
\end{align}
of square integrable complex-valued functions on the set $G$ with a weight $\nu: G\to (0,+\infty)$. The norm  $\|f\|_{\cH}:= \sqrt{\bra f,f\ket_{\cH}}$ in the space $\cH$ is generated by
the inner product
\begin{equation}
\label{braket}
    \bra f,g\ket_{\cH}:=     \int_G
    \overline{f(x)}g(x)\nu(x)\rd x,
\end{equation}
where the integral is over the standard $n$-variate Lebesgue measure, and $\overline{(\cdot)}$ is the complex conjugate.  Furthermore, we denote by
$
    \bra X,Y\ket:= \Tr(X^*Y)
$
the Frobenius inner product \cite{HJ_2007} of complex matrices $X$ and $Y$, which reduces to  $\Tr(X Y)$ for complex Hermitian matrices (the real Hilbert space of such matrices of order $r$ is denoted by $\mH_r$, with $\mH_r^+$ the corresponding set of positive semi-definite matrices). Also, $(\cdot)^*:= (\overline{(\cdot)})^{\rT}$ is the complex conjugate transpose, and vectors are organised as columns  unless specified otherwise.
Suppose $(\varphi_k)_{k \in \Omega}\in \cH$ is a fixed but otherwise arbitrary orthonormal basis in the Hilbert space (\ref{cH}), whose elements are indexed by a  denumerable set $\Omega$. The latter is assumed to be a subset of a multidimensional integer lattice which contains the origin: $0\in \Omega$. Therefore,
\begin{equation}
\label{ortho}
    \bra \varphi_j, \varphi_k \ket_{\cH}
    =
    \delta_{jk},
    \qquad
    j,k\in \Omega,
\end{equation}
where
$\delta_{jk}$ is the Kronecker delta. Now, suppose $\Lambda$ is a given nonempty finite subset of $\Omega$ consisting of $N:= \# \Lambda$ elements. Also,
let $S:= (s_{jk})_{ j,k\in \Lambda} \in \mH_N^+$ be a 
matrix  of unit trace $\Tr S:= \sum_{k\in \Lambda} s_{kk} = 1$. Such matrices form a convex subset of the space $\mH_N$ which we denote by
\begin{equation}
\label{fS}
  \fS_N:=
  \left\{
    S\in \mH_N^+:\ \Tr S = 1
  \right\}.
\end{equation}
With the orthonormal basis $(\varphi_k)_{k \in \Omega}$ in $\cH$ and the set $\Lambda$ being fixed, we associate with $S \in \fS_N$
a real-valued function $p:G\to \mR$ defined as a quadratic form
\begin{align}
\nonumber
        p(x)
        := &
        \nu(x)
        \sum_{j,k\in \Lambda}
        s_{jk}
        \overline{\varphi_j(x)}
        \varphi_k(x)
        =
        \nu(x)
        \Phi(x)^*
        S
        \Phi(x)\\
\label{p}
        =&
        \nu(x)
        \Bra
            S,
            \Phi(x)\Phi(x)^*
        \Ket,
        \qquad
        x \in G,
\end{align}
where $\Phi: G \to \mC^N$ is a vector-valued map
formed from the basis functions as
\begin{equation}
\label{Phi}
    \Phi(x)
    :=
    (\varphi_k(x))_{k \in \Lambda}.
\end{equation}

\begin{lem}
\label{lem:pS}
For any 
matrix $S\in \fS_N$ from the set (\ref{fS}), the corresponding function $p$, defined by (\ref{p}) and (\ref{Phi}), is a PDF on the set   $G$. 
\end{lem}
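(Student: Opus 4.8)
The plan is to verify the two defining properties of a probability density function on $G$: pointwise nonnegativity and normalization to unit integral, together with the integrability that makes the latter meaningful.

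First I would establish that $p$ is real-valued and nonnegative everywhere on $G$. Since $S \in \mH_N^+$ is positive semi-definite Hermitian, the associated Hermitian quadratic form $\Phi(x)^* S \Phi(x)$ is real and satisfies $\Phi(x)^* S \Phi(x) \> 0$ for every $x \in G$; this is precisely the defining property of $\mH_N^+$ applied to the vector $\Phi(x) \in \mC^N$ from (\ref{Phi}). Because the weight obeys $\nu(x) > 0$, it follows at once from (\ref{p}) that $p(x) = \nu(x)\Phi(x)^* S \Phi(x)$ is real with $p(x) \> 0$ for all $x \in G$.

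Next I would compute $\int_G p(x)\rd x$ and show it equals $1$. The decisive structural fact is that $\Lambda$ is a finite set, so the sum in (\ref{p}) has finitely many terms and may be interchanged with the integral without any convergence subtleties. Integrability of each summand follows from the Cauchy--Schwarz inequality: since each $\varphi_k \in \cH$, the product $\overline{\varphi_j}\varphi_k$ is $\nu$-integrable, with $\int_G |\overline{\varphi_j(x)}\varphi_k(x)|\nu(x)\rd x \< \|\varphi_j\|_{\cH}\|\varphi_k\|_{\cH} < +\infty$. Performing the interchange and recognizing each resulting integral as the inner product (\ref{braket}) yields $\int_G p(x)\rd x = \sum_{j,k \in \Lambda} s_{jk}\bra \varphi_j, \varphi_k\ket_{\cH}$. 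The orthonormality relation (\ref{ortho}) then collapses this to the diagonal sum $\sum_{k \in \Lambda} s_{kk} = \Tr S$, which equals $1$ by the unit-trace constraint defining $\fS_N$ in (\ref{fS}).

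I do not expect a substantial obstacle here: the argument amounts to combining the facts that a positive semi-definite Hermitian form is nonnegative and that integrating the quadratic form against an orthonormal basis reproduces the trace functional. The only points deserving a word of care are the real-valuedness of the quadratic form, guaranteed by $S = S^*$, and the interchange of the finite sum with the integral, justified by the finiteness of $\Lambda$ together with the integrability bound above. Accordingly, the proof should be short and computational rather than conceptually hard.
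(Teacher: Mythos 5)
Your proof is correct and follows essentially the same two-step argument as the paper: nonnegativity from the positive semi-definiteness of $S$ together with $\nu>0$, and normalization from the orthonormality (\ref{ortho}) combined with $\Tr S=1$. The only difference is your explicit Cauchy--Schwarz justification for interchanging the finite sum with the integral, which the paper treats as immediate; this is a harmless (and routine) extra detail.
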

\begin{proof}
The positive semi-definiteness  $S\succcurlyeq 0$ and the positiveness $\nu> 0$ in (\ref{p}) imply that $p(x)\> 0$ for all $x\in G$. Furthermore, the orthonormality  (\ref{ortho}) and the assumption $\Tr S = 1$ imply that the function $p$ satisfies the normalization condition
$
    \int_G
    p(x)\rd x
 =
    \sum_{j,k\in \Lambda}
    s_{jk}
    \bra \varphi_j, \varphi_k\ket_{\cH}
    =    \Tr S
    =1
$,
and hence, $p$ is indeed a PDF.
\end{proof}

Due to its properties and the role in the construction (\ref{p}) of a legitimate PDF, the matrix $S$ resembles the density matrices in quantum mechanics  \cite{M_1995,S_1994}. In order to reflect this analogy (and, at the same time, avoid confusion) with the quantum mechanical density matrices and the stochastic matrices of transition probabilities of classical Markov chains \cite{S_1996}, we will refer to $S$ as a \emph{stochastic density matrix} (SDM). Since the set $\fS_N$ of SDMs $S$ in (\ref{fS}) is convex and the PDF $p$ in (\ref{p}) depends linearly on $S$, the set of such PDFs is also convex.
If the Hilbert space in (\ref{cH}) is restricted to real-valued functions, then the SDM becomes a real symmetric matrix, and the results that follow can be appropriately reformulated for this case.

\section{ALGEBRAIC STRUCTURE AND MOMENTS}\label{sec:alg}

For what follows, we assume that $\varphi_0=1$, and hence, in view of (\ref{ortho}), the weighting function $\nu$ satisfies $\int_G \nu(x)\rd x = 1$  and is itself a PDF. This allows the inner product (\ref{braket}) to be represented as
$\bra f, g\ket_{\cH}=\bE_{\nu}(\overline{f}g)
$,
where $\bE_{\nu}(\cdot)$ denotes the expectation over the PDF $\nu$. Also, suppose the basis functions are algebraically closed in the sense that there exist complex constants $e_{jk\ell}$ such that for any $j,k\in \Omega$, the representation
\begin{equation}
\label{tri}
    \varphi_j(x)
    \overline{\varphi_k(x)}
    =
    \sum_{\ell\in \Omega}
    e_{jk\ell} \varphi_{\ell}(x)
\end{equation}
holds for all $x\in G$ and contains  only a finite number of terms with nonzero coefficients $e_{jk\ell}$.
The coefficients $e_{jk\ell}$ in (\ref{tri}), which quantify the algebraic structure of the orthonormal basis $(\varphi_k)_{k \in \Omega}$,
can be computed as
\begin{equation}
\label{e}
    e_{jk\ell}
    =
    \bra
        \varphi_{\ell},\,
        \varphi_j \overline{\varphi_k}
    \ket_{\cH}
    =
    \bE_{\nu}(
    \varphi_j
    \overline{\varphi_k}
    \overline{\varphi_{\ell}}
    )
\end{equation}
and are symmetric with respect to the subscripts $k$, $\ell$. Furthermore, $e_{jk\ell}$ is real and invariant under arbitrary permutations of its subscripts in the real case mentioned at the end of Section~\ref{sec:SDM}.  For any $\ell \in \Omega$, we define the $\ell$th \emph{structure matrix} $    E_{\ell}
    :=
    (e_{jk\ell})_{j,k\in \Lambda} \in \mC^{N\x N}
$ by
\begin{equation}
\label{E}
    E_{\ell}
    :=
    (e_{jk\ell})_{j,k\in \Lambda}
    =
    \bE_{\nu}
    (
        \overline{\varphi_{\ell}} \Phi \Phi^*
    ),
\end{equation}
where the expectation applies entrywise, and the map $\Phi$ is given by (\ref{Phi}). In particular, since $\varphi_0=1$, then (\ref{e}) implies that $e_{jk0} = \delta_{jk}$ for all $j,k\in \Lambda$, and hence, $E_0 = I_N$ is the identity matrix of order $N$.
Since the left-hand side of (\ref{tri}) is the $(j,k)$th entry of the matrix $\Phi(x)\Phi(x)^*$, the algebraic closedness property  is representable in a vector-matrix form:
\begin{equation}
\label{PhiPhiPhi}
    \Phi(x)\Phi(x)^*
    =
    \sum_{\ell\in \mho}
    \varphi_{\ell}(x)E_{\ell}.
\end{equation}
This series contains only a finite number of terms since the matrix $E_{\ell}$ in (\ref{E}) vanishes for all but finitely many indices $\ell \in \Omega$ which form the set
\begin{equation}
\label{mho}
  \mho := \{\ell\in \Omega: E_{\ell}\ne 0\}.
\end{equation}
This set, consisting of $L:= \# \mho$ elements,   depends on the set $\Lambda$ and contains $0$ (since $E_0 = I_N$).
Substitution of (\ref{PhiPhiPhi}) into the definition of the PDF $p$ in (\ref{p}) leads to
\begin{align}
\nonumber
    p(x)
    & =
    \nu(x)
    \sum_{\ell\in \mho}
    \Bra S, E_{\ell}\Ket
    \varphi_{\ell}(x)\\
\label{pES}
     & =
    \nu(x)
    \sum_{\ell\in \mho}
    \Bra E_{\ell}, S\Ket
    \overline{\varphi_{\ell}(x)}
    =
    \nu(x)
    C(S)^* \Psi(x).
\end{align}
Here, $C: \mH_N \to \mC^L$ is a linear vector-valued map defined in terms of the nonzero structure matrices by
\begin{equation}
\label{C}
  C(S):= (\Bra E_{\ell}, S\Ket)_{\ell \in \mho},
\end{equation}
and, similarly to (\ref{Phi}),  the map $\Psi: G\to \mC^L$ is formed from the basis functions as
\begin{equation}
\label{Psi}
    \Psi(x)
    :=
    (\varphi_{\ell}(x))_{\ell \in \mho}.
\end{equation}
The second equality in (\ref{pES}) follows from $p$ and $\nu$ being real-valued.
In view of this equality and (\ref{ortho}), the expectations of the basis functions over the PDF $p$ (which play the role of generalized moments of  this PDF) are computed as
\begin{equation}
\label{mom}
    \bE_p\varphi_m
    :=
    \int_G
    p(x)
    \varphi_m(x)
    \rd x
     =
    \sum_{\ell\in \mho}
    \bra
        E_{\ell}, S
    \ket
    \bra
        \varphi_{\ell},
        \varphi_m
    \ket_{\cH}
     =
    \bra E_m, S\ket
\end{equation}
for any $m\in \Omega$  and vanish for all $m \in \Omega\setminus \mho$ in view of (\ref{mho}). Therefore, (\ref{pES}) describes the series expansion of the PDF $p$ from (\ref{p}) over the orthonormal basis  in $\cH$, and the map $C$ in (\ref{C}) encodes the dependence of all the nonzero moments in (\ref{mom}) on the SDM $S$. The fact that the moments depend linearly on $S$ allows the SDM to be chosen so as to satisfy given moment specifications for the corresponding PDF $p$ in (\ref{p}) . Note that the moments $\bE_p \varphi_m$, with $m\ne 0$, are responsible for the deviation of the PDF $p$ from the weighting function $\nu$. In particular,  (\ref{pES}) implies that the second-order relative Renyi entropy \cite{R_1961} of $p$ with respect to $\nu$ (with the latter playing the role of a reference PDF) can be expressed in terms of the moments in (\ref{mom}) as
\begin{align}
\nonumber
    \bR(p\| \nu)
    :=&
    \ln
    \int_G
    \frac{p(x)^2}{\nu(x)}
    \rd x
    =
    \ln \bE_{\nu}
    \Big(
    \Big(
        \frac{p}{\nu}
    \Big)^2\Big)\\
\nonumber
    =&
    \ln
    \sum_{\ell,m\in \mho}
    \Bra E_{\ell}, S\Ket
    \Bra S, E_m\Ket
    \Bra
        \varphi_{\ell},
        \varphi_m
    \Ket_{\cH}\\
\label{bR}
    =&
    \ln
    \Big(
        1
        +
        \sum_{\ell\in \mho\setminus \{0\}}
        |\Bra E_{\ell}, S\Ket|^2
    \Big).
\end{align}
Here, the orthonormality condition (\ref{ortho}) is used in combination with the property that $\bra E_0, S\ket = \bra I_N, S\ket = \Tr S = 1$.

\section{EFFECTIVE PARAMETERS}\label{sec:eff_par}

Note that the PDF $p$, defined  by (\ref{p}), can also be represented in the form
\begin{equation}
\label{psi2}
    p(x)
    =
    \sum_{j \in \Lambda}
    \sigma_j
    \big|
    \sqrt{\nu(x)}\,
    \theta_j(x)
    \big|^2
\end{equation}
which resembles the SOS structures \cite{L_2001,P_2000}. Here,
$\sigma_j$ denote the eigenvalues of the SDM $S$ (they are all real and nonnegative and satisfy $\sum_{j\in \Lambda}\sigma_j = 1$), and the functions $\theta_j: G\to \mC$  are obtained by a unitary transformation of the functions $\varphi_k$ as
\begin{equation}
\label{theta}
    \theta_j(x)
    :=
    \sum_{k\in \Lambda}
    \overline{u_{kj}}
    \varphi_k(x),
    \qquad
    j \in \Lambda,\
    x \in G.
\end{equation}
Also, $U:= (u_{jk})_{j,k\in \Lambda} \in \mC^{N\x N}$ is a unitary matrix whose columns are the eigenvectors of $S$, and hence, $S = U\Sigma U^*$, where $\Sigma:= \diag_{j\in \Lambda}(\sigma_j) $ is a diagonal matrix formed from the eigenvalues of $S$. The functions $\theta_j$, defined by (\ref{theta}),  are also orthonormal elements of the Hilbert space $\cH$ in (\ref{cH}), that is, $\bra \theta_j, \theta_k\ket_{\cH} = \delta_{jk}$. The representation (\ref{psi2})  of the PDF $p$ as a convex combination  of the squares of the functions $\sqrt{\nu}|\theta_j|$ involves not only the freedom of varying  the coefficients $\sigma_j$, but also  the possibility  of ``mixing'' the $N$ elements $\varphi_k$  of the given  orthonormal basis of $\cH$ in an arbitrary unitary fashion (\ref{theta}). Since the SDM $S$ is Hermitian and has unit trace,   then the total number of real parameters which enter the PDF $p$ through $S$ is
$
    \dim \mH_N-1 =
    N^2 -1
$.
However, not all of these parameters are active, in general.  From the decomposition
$
    S = \Pi(S)+ \Pi^{\bot}(S)
$
of the matrix $S$ into the orthogonal projections $\Pi(S)$ and $\Pi^{\bot}(S): = S - \Pi(S)$ onto the linear subspace
\begin{equation}
\label{cE}
    \cE
    :=
    \Span\{E_{\ell}:\ \ell \in \mho\},
\end{equation}
spanned by the nonzero structure matrices in (\ref{E}), and its orthogonal complement $\cE^{\bot}$, respectively, it follows that $\Pi^{\bot}(S)$ does not influence the PDF $p$ in (\ref{pES}). Here, $\mho$ is a finite subset of $\Omega$ given by (\ref{mho}). Hence, the number of effective parameters of $p$ does not exceed the dimension $\dim \cE$ of the subspace (\ref{cE}), that is, the rank of an appropriate Gram matrix:
\begin{equation}
\label{dim}
    \dim \cE
    =
    \rank
    (\bra E_{\ell}, E_m\ket)_{\ell, m\in \mho}.
\end{equation}
The following lemma provides a condition for the nonsingularity of the Gram matrix, that is,  the linear independence of the structure matrices $E_\ell$, with $\ell \in \mho$,  in the Hilbert space $\mC^{N\x N}$. To this end, for any finite subset $M \subset \Omega$, we denote by
\begin{equation}
\label{cHM}
    \cH_M
    :=
    \Span
    \{
        \varphi_k:\
        k \in M
    \}
\end{equation}
the subspace of $\cH$ spanned by the corresponding set of basis functions. Also,  let $\cQ_{\Lambda}$ be  a set of functions on $G$ associated with the subspace $\cH_{\Lambda}$ as
\begin{equation}
\label{HH}
    \cQ_{\Lambda}
    :=
    \{f\overline{g}:\ f,g\in \cH_{\Lambda}\}.
\end{equation}
From the algebraic closedness (\ref{tri}) of the basis functions, it follows that
\begin{equation}
\label{HHH}
    \cQ_{\Lambda}\subset \cH_{\mho}.
\end{equation}
Indeed, in view of the notation (\ref{cHM}), any two functions $f,g \in \cH_{\Lambda}$ are representable as $f= a^*\Phi$ and $g= b^*\Phi$ for some  vectors $a,b\in \mC^N$, where $\Phi$ is given by (\ref{Phi}). Therefore, (\ref{PhiPhiPhi}) implies that $f\overline{g} = a^* \Phi\Phi^*b= \sum_{\ell\in \mho} a^*E_{\ell}b \varphi_{\ell} \in \cH_{\mho}$, whence the inclusion (\ref{HHH}) follows in view of the arbitrariness of the functions $f,g \in \cH_{\Lambda}$.

\begin{lem}
\label{lem:HH}
The linear subspace $\cE$, defined by (\ref{cE}), has full dimension $\dim \cE = L$ if and only if the orthogonal complement of the set $\cQ_{\Lambda}$, given by (\ref{HH}), to the subspace $\cH_{\mho}$ consists of the zero function:
\begin{equation}
\label{QH}
    \cQ_{\Lambda}^{\bot}
    \bigcap \cH_{\mho} = \{0\}.
\end{equation}
\end{lem}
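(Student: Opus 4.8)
The plan is to recast the geometric condition (\ref{QH}) as the linear independence of the structure matrices $\{E_\ell\}_{\ell\in\mho}$, which is exactly what $\dim\cE=L$ asserts in view of (\ref{dim}): the $L\times L$ Gram matrix there is nonsingular (rank $L$) precisely when the $E_\ell$ are linearly independent. The bridge between the two sides will be a conjugate-linear but bijective correspondence between functions in $\cH_\mho$ and the matrix combinations $\sum_{\ell\in\mho}\overline{c_\ell}E_\ell$. Concretely, I will show that $h:=\sum_{\ell\in\mho}c_\ell\varphi_\ell$ lies in $\cQ_\Lambda^\bot$ if and only if $\sum_{\ell\in\mho}\overline{c_\ell}E_\ell=0$, after which the equivalence follows from the orthonormality of $\{\varphi_\ell\}_{\ell\in\mho}$.

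First I would note that every element of $\cH_\mho$ has the form $h=\sum_{\ell\in\mho}c_\ell\varphi_\ell$ with $c=(c_\ell)_{\ell\in\mho}\in\mC^L$, and that by (\ref{ortho}) one has $h=0$ iff $c=0$. Next, since the linear span of $\cQ_\Lambda$ in (\ref{HH}) coincides with $\Span\{\varphi_j\overline{\varphi_k}:j,k\in\Lambda\}$ — because $f\overline{g}=\sum_{j,k\in\Lambda}a_j\overline{b_k}\,\varphi_j\overline{\varphi_k}$ for $f=\sum_j a_j\varphi_j$ and $g=\sum_k b_k\varphi_k$ in $\cH_\Lambda$ — the membership $h\in\cQ_\Lambda^\bot$ is equivalent to $\bra\varphi_j\overline{\varphi_k},h\ket_{\cH}=0$ for all $j,k\in\Lambda$. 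I would then substitute the expansion $\varphi_j\overline{\varphi_k}=\sum_{\ell\in\mho}(E_\ell)_{jk}\varphi_\ell$ coming from (\ref{tri})--(\ref{PhiPhiPhi}) and use (\ref{ortho}) to compute
\[
  \bra\varphi_j\overline{\varphi_k},h\ket_{\cH}
  =\sum_{\ell\in\mho}\overline{(E_\ell)_{jk}}\,c_\ell
  =\overline{\Big(\sum_{\ell\in\mho}\overline{c_\ell}E_\ell\Big)_{jk}},
\]
so that these quantities vanish for all $j,k\in\Lambda$ exactly when $\sum_{\ell\in\mho}\overline{c_\ell}E_\ell=0$.

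Combining these steps, a nonzero $h\in\cQ_\Lambda^\bot\cap\cH_\mho$ exists iff there is a nonzero $c$ with $\sum_{\ell\in\mho}\overline{c_\ell}E_\ell=0$; since $c\mapsto\overline{c}$ is a bijection of $\mC^L$, this is precisely a nontrivial linear dependence of $\{E_\ell\}_{\ell\in\mho}$. Hence (\ref{QH}) holds iff the $E_\ell$ are linearly independent, i.e.\ iff $\dim\cE=L$, which is the claim. The only point demanding care — and the step I would treat as the main obstacle — is the bookkeeping of complex conjugation, ensuring that the scalar orthogonality conditions assemble into the single matrix equation through a genuine bijective correspondence, and that the reduction of $\cQ_\Lambda^\bot$ to orthogonality against the finite spanning set $\{\varphi_j\overline{\varphi_k}\}$ is legitimate (which rests on (\ref{HHH})); the remainder is the finite-dimensional linear algebra of the Gram matrix already recorded in (\ref{dim}).
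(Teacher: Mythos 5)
Your proof is correct and follows essentially the same route as the paper: both hinge on the correspondence between a vanishing combination $\sum_{\ell\in\mho}\overline{c_\ell}E_\ell=0$ and a nonzero $h=\sum_{\ell\in\mho}c_\ell\varphi_\ell$ orthogonal to all products $f\overline{g}$ with $f,g\in\cH_{\Lambda}$, using (\ref{tri}), (\ref{E}) and the orthonormality (\ref{ortho}). The only (cosmetic) difference is that you test the matrix identity entrywise against the spanning set $\{\varphi_j\overline{\varphi_k}\}_{j,k\in\Lambda}$, whereas the paper tests it with arbitrary vectors $a,b\in\mC^N$ via $a^*(\cdot)b$, which amounts to the same computation.
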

\begin{proof}
As mentioned above in regard to (\ref{dim}),
the property $\dim \cE = L$ is equivalent to the linear independence of the structure matrices $E_\ell$, with $\ell \in \mho$, in the space $\mC^{N\x N}$. Now, a  linear combination
$
    \sum_{\ell\in \mho} c_{\ell} E_{\ell}
$ of these matrices
with complex coefficients $c_{\ell}$ vanishes if and only if so does the quantity
$
    a^*
    \sum_{\ell\in \mho}
    c_{\ell} E_{\ell}b
    =
    \sum_{\ell\in \mho}
    c_{\ell}
    \bE_{\nu}\left(\overline{\varphi_{\ell}}\, a^* \Phi\Phi^*b \right)
    =
    \bE_{\nu}\left(f\overline{g}\overline{h}\right)
$
for all $a, b \in \mC^N$. Here, $f:= a^*\Phi \in \cH_{\Lambda}$, $g:= b^*\Phi \in \cH_{\Lambda}$, and $h := \sum_{\ell\in \mho} \overline{c_{\ell}} \varphi_{\ell} \in \cH_{\mho}$ are auxiliary functions. Hence, the matrices $E_{\ell}$, with $\ell \in \mho$, are linearly dependent if and only if there exists a function $h\in \cH_{\mho}\setminus \{0\}$ such that $\bE_{\nu}(f\overline{g}\overline{h})=0$ for all $f,g\in \cH_{\Lambda}$,  that is, $h \in \cQ_{\Lambda}^{\bot}$. This establishes the equivalence between the linear independence of these matrices and the condition (\ref{QH}).
\end{proof}

\section{HERMITE POLYNOMIAL AND FOURIER BASES}\label{sec:Herm}

For completeness, we will now specify the PDFs (\ref{p}) for two classical Hilbert spaces of functions.
Assuming that $G:= \mR^n$, consider the functions
\begin{equation}
\label{h}
    \varphi_k(x)
    :=
    \frac{H_k(x)}{\sqrt{k!}},
    \qquad
    x:= (x_j)_{1\< j\< n} \in \mR^n,
\end{equation}
which are labeled by $n$-indices $k:= (k_j)_{1\< j \< n} \in \mZ_+^n=:\Omega$ (with $\mZ_+$ the set of nonnegative integers) and are obtained from the $n$-variate Hermite polynomials
\begin{equation}
\label{H}
    H_k(x)
    :=
    (-1)^{|k|}
    \re^{\frac{1}{2}|x|^2}
    \d_x^k
    \re^{-\frac{1}{2}|x|^2}.
\end{equation}
Here, use is made of the standard multiindex notation $k!:= k_1!\x \ldots \x k_n!$, $|k|:= k_1 + \ldots + k_n$, and $\d_x^k := \d_{x_1}^{k_1}\ldots \d_{x_n}^{k_n}$, so that $H_k=H_{k_1}\ox \ldots \ox H_{k_n}$ is the tensor product of the corresponding univariate Hermite polynomials. The functions (\ref{h}) form an orthonormal basis in the real Hilbert space $\cH:= \cL^2(\mR^n, \nu)$ of real-valued square integrable functions, where the weight $\nu$ is the $n$-variate standard normal PDF
\begin{equation}
\label{nun}
    \nu(x)
    :=
    \frac{\re^{-\frac{1}{2}|x|^2}}{(2\pi)^{n/2}},
    \qquad
    x \in \mR^n.
\end{equation}
The orthonormal basis $(\varphi_k)_{k\in \mZ_+^n}$, given by  (\ref{h}) and (\ref{H}), satisfies a real-valued version of (\ref{tri}). In view of
\cite[Eq. (3.13) on p. 28]{J_1997} and \cite[Theorem 3.2.1 on p. 13]{M_1997}, the structure coefficients (\ref{e}) are computed as
\begin{equation}
\label{eh}
    e_{jk\ell}
    =
    \frac{\sqrt{j!k!\ell!}}{(m-j)!(m-k)!(m-\ell)!},
    \qquad
    m:=\frac{j+k+\ell}{2},
\end{equation}
for all $n$-indices $j,k,\ell \in \mZ_+^n$ whose entries do not exceed the corresponding entries of $m$ and such that $j+k+\ell$ consists of even integers, with $e_{jk\ell} = 0$ otherwise. In this case, the PDF $p$ in (\ref{p}) is specified by a real symmetric SDM $S$, and its representation  (\ref{pES}) in terms of the structure matrices from (\ref{E}) takes the form
\begin{align}
    p(x)  = \frac{\re^{-\frac{1}{2}|x|^2}}{(2\pi)^{n/2}}\sum_{j,k \in \Lambda}s_{jk}\frac{H_j(x)H_k(x)}{\sqrt{j!k!}}
\label{pherm}
          = \frac{\re^{-\frac{1}{2}|x|^2}}{(2\pi)^{n/2}}\sum_{\ell \in \mho}\Bra S, E_{\ell}\Ket \frac{H_{\ell}(x)}{\sqrt{\ell!}}
\end{align}
in view of (\ref{nun}).
Here, $\Lambda$ is a finite subset of $\mZ_+^n$, and, in view of (\ref{eh}), the corresponding set $\mho$ in (\ref{mho}) is the Minkowski sum of the set $\Lambda$ with itself: $\mho= \Lambda + \Lambda:= \{j+k:\ j,k \in \Lambda\}$. Although (\ref{pherm}) is organised as a truncated Hermite polynomial expansion, its coefficients   $\Bra S, E_{\ell}\Ket$ are parameterized by the SDM $S$ in a specific way, which, according to Lemma~\ref{lem:pS},  guarantees that $p$ is a legitimate PDF. As another example, consider the Fourier basis on the $n$-dimensional torus $\mT^n$ (where $\mT$ is realised as a half-open interval $[0,2\pi)$) consisting of the functions
\begin{equation}
\label{Fourier}
  \varphi_k(x):= \re^{ik^{\rT}x},
  \qquad
  x \in \mT^n,
  \
  k \in \mZ^n.
\end{equation}
These functions are indexed using the $n$-dimensional integer lattice $\mZ^n$ and form an orthonormal basis in the complex Hilbert space $\cH:= \cL^2(\mT^n, (2\pi)^{-n})$ with a constant weight $(2\pi)^{-n}$. The latter is the PDF of the uniform  distribution over the torus. Since the functions (\ref{Fourier})  satisfy $\varphi_j\overline{\varphi_{k}} = \varphi_{j-k}$, the algebraic structure coefficients in (\ref{tri}) are given by
\begin{equation}
\label{efourier}
    e_{jk\ell} = \delta_{j-k,\ell},
    \qquad
    j,k,\ell \in \mZ^n.
\end{equation}
In this case, in view of (\ref{pES}), the PDF $p$ in (\ref{p}) takes the form of a trigonometric polynomial
\begin{align}
    p(x)  =
    \frac{1}{(2\pi)^{n}}
    \sum_{j,k \in \Lambda}
    s_{jk}\re^{i(k-j)^{\rT}x}
\label{pfourier}
        =
        \frac{1}{(2\pi)^{n}}
        \sum_{\ell \in \mho}
        \Bra S, E_{\ell}\Ket
        \re^{i\ell^{\rT}x},
\end{align}
where the set $\mho= \Lambda-\Lambda:=\{j-k:\ j,k\in \Lambda\}$ (which is the Minkowski difference of the set $\Lambda$ with itself) is symmetric about the origin in  $\mZ^n$. Again,
Lemma~\ref{lem:pS} ensures that (\ref{pfourier}) describes a legitimate PDF on the torus for any SDM $S$. In the multivariate case being considered, the trigonometric polynomial $p$ in (\ref{pfourier}) is not necessarily reducible to the squared absolute value of a single trigonometric polynomial.   The Fejer-Riesz theorem  \cite{RS_1955} guarantees such a spectral factorization of a nonnegative trigonometric polynomial only in the univariate case. From the representation (\ref{psi2}), with $\nu=(2\pi)^{-n}$,  it follows that (\ref{pfourier}) is a mixture of $N$ such factorizations:
\begin{equation}
\label{psi2fourier}
    p(x) =
    (2\pi)^{-n}
    \sum_{j\in \Lambda}
    \sigma_j
    |\theta_j(x)|^2,
\end{equation}
where, in accordance with (\ref{theta}),
$
        \theta_j(x)
    :=
    \sum_{k\in \Lambda}
    \overline{u_{kj}}
    \re^{ik^{\rT}x}
$
are trigonometric polynomials. The PDF $p$ in (\ref{psi2fourier}) reduces to a single Fejer-Riesz spectral factorization in the case of a rank-one SDM $S = uu^*$, where $u\in \mC^N$ is a unit complex vector. Recalling the analogy with the quantum mechanical density matrices mentioned above, such SDMs correspond to pure quantum states \cite{S_1994} which are extreme points of the convex set of mixed states. Therefore, this multivariate Fourier basis example shows that the SDM approach is, in principle, able to produce a wider class of PDFs than those  obtained by squaring  a single linear combination of elementary functions.

\section{QUADRATICALLY OPTIMAL SDM APPROXIMATION OF PDFS}\label{sec:SDMapp}

We will now consider a problem of approximating a given PDF $f: G \to \mR_+$ by the  PDF $p$ from  (\ref{p}). More precisely, assuming  that the orthonormal basis in $\cH$ and the set $\Lambda$ are fixed, the SDM $S$ is varied over the set (\ref{fS}) so as to minimize a discrepancy between the actual PDF $f$ and the approximating PDF $p_S:=p$ (which is parameterized by $S$):
\begin{equation}
\label{Dmin}
    D(f,p_S) \longrightarrow \min,
    \qquad
    S \in \fS_N.
\end{equation}
For example, one of such proximity criteria is described by a quadratic functional
\begin{align}
\nonumber
    D(f,p)
    := &
        \frac{1}{2}
        \Big\|\frac{f-p}{\nu}\Big\|_{\cH}^2
        =
    \frac{1}{2}
    \int_G
    \frac{(f(x)-p(x))^2}{\nu(x)} \rd x\\
\label{D}
    =&
    \frac{1}{2}\Big(\re^{\bR(f\|\nu)} + \re^{\bR(p\|\nu)}\Big) - \int_G \frac{f(x)p(x)}{\nu(x)}\rd x,
\end{align}
where use is made of the Renyi entropy from  (\ref{bR}), provided $\bR(f\| \nu)$ and $\bR(p\|\nu)$  are finite, and the $\frac{1}{2}$ factor is introduced for further convenience. A similar, though different, ``mean integrated squared error'' criterion is employed in the kernel density estimation; see, for example,  \cite{BGK_2010,D_2007} and the references therein. In view of the linear dependence of $p_S$ on $S$, the quantity  $D(f,p_S)$, given by (\ref{D}), is a convex (but not necessarily strictly convex) quadratic function of $S$, which makes (\ref{Dmin}) a convex quadratic minimization problem over the convex set of SDMs $\fS_N$. We will therefore consider a ``regularised'' version of this problem:
\begin{equation}
\label{Dminmu}
    D(f,p_S) - \mu \ln\det S \longrightarrow  \min,
    \qquad
    S \in \fS_N,
\end{equation}
where $\mu>0$ is an additional parameter which weights the strictly convex function $-\ln\det S$, with the latter also playing the role of a barrier function for the SDM $S$ to avoid singularity and hence, to remain positive definite. The resulting strictly convex minimization problem (\ref{Dminmu}) has a unique solution which is described below.
The following theorem employs a positive semi-definite self-adjoint operator $\cA$, acting on the Hilbert space $\mH_N$ and completely specified by the structure matrices (\ref{E}) as
\begin{equation}
\label{cA}
    \cA(X):= \sum_{\ell \in \mho} \bra E_{\ell}, X\ket E_{\ell},
    \qquad
    X \in \mH_N.
\end{equation}
We will also need a linear operator $\cB$ which maps a PDF $f: G\to \mR_+$ to a complex Hermitian matrix
\begin{equation}
\label{cB}
    \cB(f)
    : =
    \sum_{\ell\in \mho}
    \bE_f \varphi_{\ell} E_{\ell},
\end{equation}
with the latter being related to the generalized moments of $f$:
\begin{equation}
\label{bEf}
    \bE_f \varphi_{\ell}
    =
    \int_G f \varphi_{\ell} \rd x
    =
    \Big\bra
        \frac{f}{\nu},\,
        \varphi_{\ell}
    \Big\ket_{\cH}.
\end{equation}
The finiteness of these moments is guaranteed by the assumption $\bR(f\| \nu)<+\infty$ in view of the orthonormality (\ref{ortho}) and the Cauchy-Bunyakovsky-Schwarz inequality.

\begin{theorem}
\label{th:Sopt}
Suppose $f: G\to \mR_+$ is a given PDF with finite Renyi entropy in (\ref{bR}): $\bR(f\|\nu )<+\infty$. Then for any given value of the barrier parameter $\mu>0$, the optimization problem (\ref{Dminmu}), with the proximity criterion (\ref{D}), has a unique solution $S\succ 0$ which satisfies
\begin{equation}
\label{Sopt}
    \cA(S) - \mu S^{-1}= \lambda I_N +  \cB(f).
\end{equation}
Here, $\lambda\in \mR$ is found from the normalization condition $\Tr S = 1$, and the operators
$\cA$ and $\cB$ are defined by (\ref{cA})--(\ref{bEf}).
\end{theorem}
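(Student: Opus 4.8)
The plan is to recast the regularised objective in (\ref{Dminmu}) as an explicit strictly convex function of the matrix $S$ over the convex set $\fS_N$, and then to read off (\ref{Sopt}) as its first-order optimality condition.

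First I would rewrite the three terms of $D(f,p_S)$ from (\ref{D}) in terms of the operators $\cA$ and $\cB$. The term $\re^{\bR(f\|\nu)}$ contributes only the additive constant $\frac{1}{2}\re^{\bR(f\|\nu)}$, finite by hypothesis, so it does not affect the minimisation. For the self-term, combining (\ref{bR}) with the identity $\langle S, \cA(S)\rangle = \sum_{\ell\in\mho}|\langle E_\ell, S\rangle|^2$, which follows from the definition (\ref{cA}), the conjugate-symmetry $\langle E_\ell,S\rangle = \overline{\langle S, E_\ell\rangle}$, and the fact that the $\ell=0$ term equals $|\Tr S|^2 = 1$ since $E_0 = I_N$, gives $\re^{\bR(p_S\|\nu)} = \langle S, \cA(S)\rangle$. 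For the cross-term, using the expansion $p_S = \nu\sum_{\ell\in\mho}\langle S, E_\ell\rangle\varphi_\ell$ from (\ref{pES}) together with the moment identity $\int_G f\varphi_\ell\,\rd x = \bE_f\varphi_\ell$ in (\ref{bEf}) and the definition (\ref{cB}), I would obtain $\int_G \frac{f p_S}{\nu}\rd x = \langle S, \cB(f)\rangle$. Hence, up to an additive constant, the objective in (\ref{Dminmu}) equals
\[
    J(S) := \tfrac{1}{2}\langle S, \cA(S)\rangle - \langle S, \cB(f)\rangle - \mu\ln\det S.
\]

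The key structural observation is that $J$ is strictly convex on the positive-definite cone: the quadratic form $\frac{1}{2}\langle S,\cA(S)\rangle$ is convex because $\cA\succcurlyeq 0$, the term $-\langle S,\cB(f)\rangle$ is linear, and $-\mu\ln\det S$ is strictly convex for $\mu>0$. I would then argue existence and uniqueness of the minimiser: the barrier $-\mu\ln\det S$ tends to $+\infty$ as $S$ approaches the relative boundary of the compact convex set $\fS_N$, where $S$ becomes singular, so the infimum is attained at an interior point $S\succ 0$, and strict convexity makes it unique. This is also what legitimises writing $S^{-1}$ in (\ref{Sopt}).

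Finally I would derive the stationarity condition. Since the minimiser is positive definite, the semidefiniteness constraint is inactive, and the only active constraint is the affine equality $\Tr S = \langle I_N, S\rangle = 1$. Differentiating $J$ in the real Hilbert space $\mH_N$ with the Frobenius inner product, and using the self-adjointness of $\cA$ together with the standard identity $\nabla_S \ln\det S = S^{-1}$, the Lagrangian $J(S) - \lambda(\Tr S - 1)$ has gradient $\cA(S) - \cB(f) - \mu S^{-1} - \lambda I_N$; setting it to zero yields (\ref{Sopt}), with $\lambda$ fixed by $\Tr S = 1$. I expect the main obstacle to be the rigorous justification that the optimiser lies in the relative interior of $\fS_N$, so that $S^{-1}$ is defined and the inequality constraint may be dropped: since $\cA$ is in general only positive semidefinite, all of the strict convexity, and hence the interior-point conclusion, must be supplied by the logarithmic barrier rather than by the quadratic part.
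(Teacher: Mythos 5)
Your proposal is correct, and it reaches (\ref{Sopt}) by a partly different route than the paper. The stationarity step is essentially the same (a Lagrange multiplier for the trace constraint, with $\nabla_S\ln\det S = S^{-1}$), but you first reduce the integral criterion to the explicit finite-dimensional function $J(S)=\frac{1}{2}\bra S,\cA(S)\ket - \bra S,\cB(f)\ket - \mu\ln\det S$ via the identities $\int_G p_S^2/\nu\,\rd x=\bra S,\cA(S)\ket$ and $\int_G fp_S/\nu\,\rd x=\bra S,\cB(f)\ket$, whereas the paper differentiates the functional $D(f,p_S)$ directly, computing the Frechet derivative $\d_S D(f,p_S)=\int_G(p_S-f)\Phi\Phi^*\rd x$ and identifying it with $\cA(S)-\cB(f)$ through the integral representations (\ref{pmom})--(\ref{fmom}); these are two equivalent ways of computing the same gradient. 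The more substantive divergence is in existence and uniqueness. You argue existence by compactness of $\fS_N$ plus the blow-up of the barrier $-\mu\ln\det S$ on the relative boundary, forcing an interior minimizer $S\succ 0$, and uniqueness by strict convexity of $J$ (supplied entirely by the barrier, since $\cA$ is only positive semi-definite -- a point you correctly flag). The paper instead asserts strict convexity of the problem in the preamble and, inside the proof, establishes uniqueness of the \emph{pair} $(\lambda,S)$ solving the equation (\ref{Sopt}) with $\Tr S=1$, $S\succ 0$, by operator monotonicity in the Loewner order: $\cA$ is nondecreasing (since $\cA(X)=\int_G\nu\bra X,\Phi\Phi^*\ket\Phi\Phi^*\rd x\succcurlyeq 0$ for $X\succcurlyeq 0$) and $S\mapsto -S^{-1}$ is strictly increasing. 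Your route buys an explicit, self-contained attainment argument that the paper leaves implicit; the paper's monotonicity argument buys the slightly stronger fact that the first-order equation itself has at most one admissible solution, which is what justifies treating (\ref{Sopt}) as a well-posed defining relation for $S$ (and is the form of uniqueness later differentiated in time in Theorem~\ref{th:Sdot}).
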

\begin{proof}
Application of the method of Lagrange multipliers to the strictly convex minimization problem (\ref{Dminmu}) yields the following condition of optimality:
\begin{align}
\nonumber
    \d_S(D(f,p_S) & - \mu\ln\det S - \lambda (\Tr S-1))\\
\label{dL}
     & = \d_S D(f,p_S)
     - \lambda I_N - \mu S^{-1} = 0.
\end{align}
Here, $\lambda\in \mR$ is  the Lagrange multiplier associated with the trace constraint in (\ref{fS}), and
\begin{equation}
\label{dD}
    \d_S D(f,p_S)
    =
    \int_G
    (p_S-f) \Phi\Phi^* \rd x
\end{equation}
is the Frechet derivative of $D(f,p_S)$ in (\ref{D})  as a composite function of the SDM $S$, where use is made of the pointwise Frechet derivative $
    \d_S p_S = \nu \Phi\Phi^*
$
of  the PDF $p_S$ in (\ref{p}).
By combining (\ref{PhiPhiPhi}) with the moments of the PDF $p_S$ in (\ref{mom}), it follows that
\begin{align}
\label{pmom}
        \int_G p_S \Phi\Phi^* \rd x
        & =
        \sum_{\ell \in \mho}
                \bE_p \varphi_{\ell}
        E_{\ell}
        =
        \cA(S),\\
\label{fmom}
    \int_G f \Phi\Phi^*\rd x
    & =
    \sum_{\ell\in \mho}
    \bE_f \varphi_{\ell} E_{\ell}
    =
    \cB(f).
\end{align}
Here, the operator $\cA$, defined by (\ref{cA}), is the second-order Frechet derivative $\d_S^2D(f,p_S)$,
and use is made of (\ref{cB}), (\ref{bEf}).
Substitution of (\ref{dD})--(\ref{fmom}) into (\ref{dL}) leads to (\ref{Sopt}).  The uniqueness of the pair $(\lambda, S)$, satisfying (\ref{Sopt}) together   with $S\succ 0$ and $\Tr S = 1$, is ensured by strict monotonicity of $\Tr S$ and
$
    \cA(S) - \mu S^{-1}
$
with respect to $S$ in the sense of the partial ordering on $\mH_N$ induced by positive semi-definiteness. Indeed, (\ref{pmom})  implies that the operator $\cA$ is nondecreasing since
$
    \cA(X) = \int_G \nu \bra X, \Phi\Phi^*\ket \Phi\Phi^*\rd x \succcurlyeq 0
$ for any $X \in \mH_N^+$, while the map $S\mapsto -S^{-1}$ is strictly increasing \cite{HJ_2007}.
\end{proof}

The barrier parameter $\mu>0$ in (\ref{Dminmu}) controls sensitivity of the optimal SDM  $S$ to the PDF $f$ being approximated. In particular, $\lim_{\mu\to +\infty} S =\frac{1}{N}I_N$. At the other extreme, for small values of $\mu$, the SDM $S$ can become nearly singular and highly sensitive to $f$.

\section{SDM APPROXIMATION OF PDF DYNAMICS FOR MARKOV DIFFUSION PROCESSES}\label{sec:SDMdyn}

The PDF $p_S$ in  (\ref{p}), with the SDM $S$ computed according to Theorem~\ref{th:Sopt}, can be used as a legitimate approximation for PDFs   of random processes. More precisely, suppose $f(t,\cdot):G \to \mR_+ $ is the time-varying PDF of a $G$-valued Markov diffusion process $\xi(t)$ with an infinitesimal generator $\cG$ (so that $\d_t \bE\varphi(t,\xi(t)) = \bE_f (\d_t\varphi + \cG(\varphi))$ for any smooth test function $\varphi: \mR_+ \x G \to \mC$;  see, for example, \cite{KS_1991}).  For what follows, it is  assumed that the Renyi entropy $\bR(f\| \nu)$  in (\ref{bR}) remains finite. This can be studied by using the integro-differential relations
\begin{equation}
\label{bRdot}
    \d_t \re^{\bR(f\| \nu)}
    =
    \d_t \bE_f \frac{f}{\nu}
    =
    \bE_f
    \Big(
        \frac{\d_t f}{\nu}
        + \cG\Big(\frac{f}{\nu}\Big)
    \Big)
    =
    2\bE_f \cG\Big(\frac{f}{\nu}\Big)
\end{equation}
which follow (provided $f$ and $\nu$ are smooth enough) from the PDF dynamics governed by the FPKE
\begin{equation}
\label{FPKE}
    \d_t f = \cG^{\dagger}(f),
\end{equation}
 where the adjoint $\cG^{\dagger}$ is understood in the sense of the standard $\cL^2$-space with the unit (or constant) weight.
Then  the SDM $S$ of the corresponding approximation $p_S$ of $f$ acquires dependence on time, which can be  represented in the form of an ODE. The theorem below  employs
a self-adjoint operator $F_S$ on the space $\mH_N$, which is the Frechet derivative of the function of $S$ on the left-hand side of (\ref{Sopt}):
\begin{equation}
\label{F}
    F_S(X) = \cA(X) + \mu S^{-1} X S^{-1},
    \qquad
    X \in \mH_N.
\end{equation}
Due to the assumption that $\mu>0$, the operator  $F_S$ is positive definite and strictly increasing for any given   $S\succ 0$. These  properties of $F_S$ are inherited by its inverse $F_S^{-1}$. Also, let $K$ be a linear operator which maps the PDF $f$ to the complex Hermitian matrix
\begin{equation}
\label{K}
      K(f):= \sum_{\ell \in \mho} \bE_f \cG(\varphi_{\ell}) E_{\ell},
\end{equation}
where $\cG$ is the generator of the Markov diffusion process $\xi$, and use is made of the structure matrices (\ref{E}).

\begin{theorem}
\label{th:Sdot}
Suppose the PDF $f(t,\cdot)$ of the underlying Markov diffusion process has finite Renyi entropy $\bR(f\| \nu)$ in (\ref{bR}) at every moment of time $t\> 0$. Then the SDM $S$, associated with $f$ according to Theorem~\ref{th:Sopt}, satisfies the ODE
\begin{equation}
\label{Sdot2}
    \dot{S} = F_S^{-1}(K(f)) - \frac{\Tr F_S^{-1}(K(f))}{\Tr F_S^{-1}(I_N)} F_S^{-1} (I_N),
\end{equation}
where the operators $F_S$ and $K$ are defined by (\ref{F}) and (\ref{K}).
\end{theorem}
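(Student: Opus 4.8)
The plan is to differentiate the optimality condition (\ref{Sopt}) of Theorem~\ref{th:Sopt} with respect to time, regarding both the SDM $S=S(t)$ and the Lagrange multiplier $\lambda=\lambda(t)$ as functions of $t$ determined implicitly by the time-varying PDF $f(t,\cdot)$. Since the operator $F_S$ in (\ref{F}) is positive definite, and hence invertible, for $\mu>0$, the implicit function theorem applied to the map $(\lambda,S)\mapsto \cA(S)-\mu S^{-1}-\lambda I_N-\cB(f)$ guarantees that $S(t)$ and $\lambda(t)$ are differentiable wherever $f$ depends smoothly on $t$, which makes the subsequent use of the chain rule legitimate.

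First I would differentiate the left-hand side of (\ref{Sopt}). The operator $\cA$ in (\ref{cA}) is linear and time-independent, so $\tfrac{d}{dt}\cA(S)=\cA(\dot S)$, while the derivative of $-\mu S^{-1}$ in the direction $\dot S$ is $\mu S^{-1}\dot S S^{-1}$. Adding these recovers precisely the operator $F_S$ of (\ref{F}) applied to $\dot S$, giving $\tfrac{d}{dt}(\cA(S)-\mu S^{-1})=F_S(\dot S)$.

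Next, for the right-hand side, $\tfrac{d}{dt}(\lambda I_N)=\dot\lambda I_N$, and the linearity of $\cB$ in (\ref{cB}) gives $\tfrac{d}{dt}\cB(f)=\sum_{\ell\in\mho}\big(\tfrac{d}{dt}\bE_f\varphi_{\ell}\big)E_{\ell}$. The key step here is to convert the time derivative of each moment: using the FPKE (\ref{FPKE}) together with the duality $\int_G \cG^{\dagger}(f)\varphi_{\ell}\rd x=\int_G f\cG(\varphi_{\ell})\rd x$ (equivalently, applying the generator property $\d_t\bE\varphi_{\ell}(\xi(t))=\bE_f\cG(\varphi_{\ell})$ to the time-independent test function $\varphi_{\ell}$), one obtains $\tfrac{d}{dt}\bE_f\varphi_{\ell}=\bE_f\cG(\varphi_{\ell})$. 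Summing against the structure matrices $E_{\ell}$ and comparing with (\ref{K}) yields $\tfrac{d}{dt}\cB(f)=K(f)$, so equating the two sides gives $F_S(\dot S)=\dot\lambda I_N+K(f)$.

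Finally I would invert $F_S$ to write $\dot S=\dot\lambda\,F_S^{-1}(I_N)+F_S^{-1}(K(f))$, and pin down $\dot\lambda$ from the normalization $\Tr S=1$, which forces $\Tr\dot S=0$. Taking traces gives $\dot\lambda\,\Tr F_S^{-1}(I_N)+\Tr F_S^{-1}(K(f))=0$; since $F_S\succ 0$ renders $F_S^{-1}$ positive definite and hence $\Tr F_S^{-1}(I_N)=\bra I_N,F_S^{-1}(I_N)\ket>0$, one may solve $\dot\lambda=-\Tr F_S^{-1}(K(f))/\Tr F_S^{-1}(I_N)$ and substitute back to obtain (\ref{Sdot2}). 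I expect the main obstacle to be the rigorous justification of the time-differentiability of $S(t)$ and $\lambda(t)$ — secured through the invertibility of $F_S$ and the implicit function theorem — together with the careful use of the generator/adjoint duality needed to replace $\d_t f$ by $\bE_f\cG(\varphi_{\ell})$ in the moment derivatives.
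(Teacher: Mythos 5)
Your proposal is correct and follows essentially the same route as the paper's own proof: differentiate the optimality condition (\ref{Sopt}) in time to get $F_S(\dot S)=\dot\lambda I_N+K(f)$, invert $F_S$, and determine $\dot\lambda$ from $\Tr\dot S=0$ using $\Tr F_S^{-1}(I_N)>0$. Your additional remarks on the implicit function theorem and the generator/adjoint duality $\partial_t\bE_f\varphi_\ell=\bE_f\cG(\varphi_\ell)$ merely spell out details the paper leaves implicit.
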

\begin{proof} By taking the time derivative on both sides of (\ref{Sopt}), it follows that
\begin{equation}
\label{Sdot}
    F_S(\dot{S}) = \dot{\lambda} I_N + \d_t \cB(f)
    =
    \dot{\lambda} I_N
    +
    K(f),
\end{equation}
where use is made of (\ref{F}) and a combination of (\ref{cB}) with (\ref{K}). The invertibility of the operator $F_S$ allows (\ref{Sdot}) to be solved for $\dot{S}$ as
\begin{equation}
\label{Sdot1}
    \dot{S}
    =
    \dot{\lambda}F_S^{-1} (I_N) + F_S^{-1}(K(f)).
\end{equation}
Since $F_S$ is strictly increasing, then $F_S^{-1} (I_N)\succ 0$. Hence, $\Tr F_S^{-1} (I_N)>0$, and  $\dot{\lambda}$ in (\ref{Sdot1}) can be uniquely found so as to satisfy  the condition $\Tr \dot{S} = 0$ (which comes from the preservation of   $\Tr S = 1$ in time):
\begin{equation}
\label{lamdot}
    \dot{\lambda}
    =
    -\frac{\Tr F_S^{-1}(K(f))}{\Tr F_S^{-1}(I_N)}.
\end{equation}
By substituting (\ref{lamdot}) back into (\ref{Sdot1}), it follows that the SDM $S$ is governed by (\ref{Sdot2}).
\end{proof}

The right-hand side of (\ref{Sdot2}) is linear with respect to the PDF $f$ and depends on $S$ in a rational fashion. The latter follows from the representation
\begin{equation}
\label{Finvvec}
    \vec(F_S^{-1}(X)) = \Big(\sum_{\ell \in \mho} \vec(E_{\ell})\vec(E_{\ell})^* + \mu \overline{S^{-1}}\ox S^{-1}\Big)^{-1} \vec(X)
\end{equation}
obtained by applying the 
vectorization $\vec(\cdot)$ of matrices \cite{M_1988} to (\ref{cA}) and (\ref{F}) and using the relation $S^{\rT}=\overline{S}$ in view of $S$ being Hermitian,
 with $\ox$ the Kronecker product.
If $f$ is an invariant PDF of the Markov process $\xi$, then (\ref{K}) implies that $K(f)=0$, and (\ref{Sdot2}) yields  $\dot{S} = 0$ in accordance with the static setting considered in Section~\ref{sec:SDMapp}. Now, for nonlinear stochastic systems, the solution $f$ of the FPKE (\ref{FPKE}) is usually not available in a closed form. In this case (when approximations of $f$ are particularly important), the right-hand side of (\ref{Sdot2}) can be evaluated by replacing the unknown PDF $f$ with its SDM approximation $p_S$, which leads to
\begin{equation}
\label{SODE}
    \dot{S} = F_S^{-1}(Q(S)) - \frac{\Tr F_S^{-1}(Q(S))}{\Tr F_S^{-1}(I_N)} F_S^{-1} (I_N).
\end{equation}
Here, $Q$ is a linear operator on the space $\mH_N$, obtained by substituting the PDF $p_S$ from (\ref{pES}) into the operator $K$ in (\ref{K}):
\begin{align}
\nonumber
    Q(S)
    & := K(p_S) = \sum_{m \in \mho} \bE_{p_S} \cG(\varphi_m) E_m\\
\label{Q}
    & =
    \sum_{\ell, m \in \mho}
    \Bra
        \varphi_{\ell},
        \cG(\varphi_m)
    \Ket_{\cH}
    \bra
        E_{\ell}, S
    \ket
    E_m.
\end{align}
The right-hand side of the ODE (\ref{SODE}) 
is a rational function of the entries of $S$ (thus resembling the Riccati equations), which can be represented in the vectorised  form by combining (\ref{Finvvec}) with
\begin{equation}
\label{Qvec}
    \vec(Q(S)) =
    \sum_{\ell, m \in \mho}
    \Bra
        \varphi_{\ell},
        \cG(\varphi_m)
    \Ket_{\cH}
    \vec(E_m)
    \vec(E_{\ell})^* \vec(S),
\end{equation}
following from (\ref{Q}). The constant matrices
    in (\ref{Finvvec}) and (\ref{Qvec}) can be precomputed using the structure matrices (\ref{E}) and the matrix elements of the generator $\cG$ over the basis. By construction, the ODE (\ref{SODE})  has a positive definite solution whose trace is preserved in time. Therefore, being a closure of (\ref{Sdot2}),  the SDM dynamics produce a legitimate approximate solution $p_S$ of  the FPKE (\ref{FPKE}).

\section{SMOLUCHOWSKI SDE ON MULTIDIMENSIONAL  TORUS}\label{sec:SSDE}

We will now consider an application of the SDM approach to the approximation of PDFs for random processes governed by nonlinear SDEs. As an illustrative example, we will use a version of the Smoluchowski SDE \cite{GS_2004,KS_1991,Z_2001} on the $n$-dimensional torus $\mT^n$:
\begin{equation}
\label{SSDE}
    \rd \xi(t) = -\nabla V(\xi (t))\rd t + \sigma \rd W(t).
\end{equation}
Here, $\xi := (\xi (t))_{t\> 0}$ is a $\mT^n$-valued Markov diffusion process, and $V: \mT^n \to \mR$ is a twice continuously differentiable   function with the gradient $\nabla V:=(\d_{x_k} V)_{1\< k \< n}: \mT^n \to \mR^n$, where $\d_{x_k}(\cdot)$ is the  partial derivative with respect to the $k$th angular  coordinate. Also,  $\sigma > 0$ is a scalar parameter, and $W$ is an $n$-dimensional standard Wiener process. The state space in the form of a multidimensional torus corresponds to systems with rotational degrees of freedom.   
 The SDE (\ref{SSDE}) (which is understood in the Ito sense) is a noisy version of the gradient descent for the function $V$ and is employed, for example, in the simulated annealing algorithm of stochastic optimization \cite{KGV_1983}.   The function $V$ can be interpreted as the potential energy of a dissipative dynamical system in contact with a heat bath at temperature $T>0$ which specifies the noise level $\sigma$ in (\ref{SSDE}) as
$
    \sigma = \sqrt{2k_{\rB}T}
$,
with $k_{\rB}$ the Boltzmann constant. This interpretation is motivated by the fact \cite{KS_1991} that the invariant probability measure for the Smoluchowski SDE (\ref{SSDE}) is absolutely continuous 
with the Gibbs-Boltzmann PDF
\begin{equation}
\label{inv}
    f_*(x) = \frac{\re^{-\beta V(x)}}{Z(\beta)},
    \qquad
    x \in \mT^n,\
    \beta := \frac{2}{\sigma^2}.
\end{equation}
Here,
$
    Z(\beta):= \int_{\mT^n} \re^{-\beta V(x)}
    \rd x
$ is the statistical mechanical partition function \cite{ME_1981} of the auxiliary parameter $\beta = \frac{1}{k_{\rB} T}$,
 associated with the potential $V$. The invariant PDF $f_*$ in (\ref{inv}) is a unique   equilibrium point of the FPKE (\ref{FPKE}) 
for the PDF $f(t,\cdot): \mT^n \to \mR_+$ of the random vector $\xi (t)$,  which takes the form
\begin{align}
    \d_t f
    & =
    \div( f\nabla V) + \frac{\sigma^2}{2}\Delta f.
\label{SFPKE}
\end{align}
Here,
$\div(\cdot)$ and $\Delta(\cdot)$ are the divergence and Laplace operators over the spatial variables. The corresponding
generator $\cG$ 
acts on a smooth test function $\varphi: \mT^n \to \mC$ as
\begin{equation}
\label{cG}
    \cG(\varphi) = -\nabla V^{\rT} \nabla \varphi + \frac{\sigma^2}{2}\Delta \varphi.
\end{equation}
Although the invariant PDF $f_*$ admits a closed-form representation (\ref{inv}), the time evolution (\ref{SFPKE}) of $f$ towards the equilibrium can be complicated, and it is especially so for multiextremum  potentials $V$. For example,
in the context of molecular dynamics simulation,  such $V$ can represent fractal-like energy landscapes  (as functions of the dihedral angles in protein macromolecules)  which are considered to play an important role in relaxation phenomena  associated  with protein folding \cite{LBO_2010}. This motivates investigation of the PDF dynamics for such FPKEs.

\section{FPKE IN SPATIAL FREQUENCY DOMAIN}\label{sec:spat}

We will now reformulate the FPKE (\ref{SFPKE}) in the spatial frequency domain using
the fact that all functions on the torus $\mT^n$  are $2\pi$-periodic with respect to the angular variables and (under the assumption of square integrability underlying the Hilbert space $\cH:= \cL^2(\mT^n,(2\pi)^{-n})$) can be represented by Fourier series. Let
\begin{equation}
\label{Vf}
    V(x)
    = \sum_{k \in \mZ^n} V_k\varphi_k(x),
    \qquad
    f(t,x)
    = \sum_{k \in \mZ^n} f_k(t)\varphi_k(x)
\end{equation}
be the Fourier series for the potential $V$ and the PDF $f(t,\cdot)$ over the corresponding basis (\ref{Fourier}).
Since the functions $V$ and $f$ are real-valued, their Fourier coefficients satisfy the Hermitian property
$
    V_{-k} = \overline{V_k}$ and
$
    f_{-k} = \overline{f_k}
$ for all $
    k \in \mZ^n
$ and any time $t$. The normalization condition for the PDF $f(t,\cdot)$ is equivalent to
\begin{equation}
\label{f0}
    f_0(t) = (2\pi)^{-n}\int_{\mT^n}f(t,x)\rd x =(2\pi)^{-n},
    \qquad
    t\>0.
\end{equation}
Since the potential $V$ is assumed to be twice continuously differentiable, $\|\Delta V\|_{\infty}
    :=
    \max_{x \in \mT^n} |\Delta V(x)| <+\infty$.
Furthermore, if $V$ is infinitely differentiable, it can be shown that the FPKE (\ref{SFPKE}) has a smooth fundamental solution \cite{S_2008}.
Similarly to the energy estimates for parabolic PDEs \cite{E_1998}, the norm $\|f(t,\cdot)\|_{\cH}$ at any time $t>0$ is amenable to an upper bound in terms of $\|f(0,\cdot)\|_{\cH}$. This is obtained by applying the Gronwall-Bellman lemma to the following differential inequality which is given here for completeness.

\begin{lem}
\label{lem:diss}
The PDF $f$,  governed by the FPKE (\ref{SFPKE}) for the Smoluchowski SDE (\ref{SSDE}) with an infinitely differentiable potential $V$, satisfies
\begin{equation}
\label{diss0}
    \big(\|f\|_{\cH}^2\big)^{^\centerdot}
    \<
    \big(
        \|\Delta V\|_{\infty} - \sigma^2
    \big)
    \|f\|_{\cH}^2
    +
    (2\pi)^{-2n}
    \sigma^2.
\end{equation}

\end{lem}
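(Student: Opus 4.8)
The plan is to differentiate the squared norm $\|f\|_{\cH}^2 = (2\pi)^{-n}\int_{\mT^n} f(t,x)^2\,\rd x$ directly in time and substitute the FPKE (\ref{SFPKE}) for $\d_t f$. Since $f$ is real, $\frac{\rd}{\rd t}\|f\|_{\cH}^2 = 2\bra f, \d_t f\ket_{\cH} = 2(2\pi)^{-n}\int_{\mT^n} f\big(\div(f\nabla V) + \tfrac{\sigma^2}{2}\Delta f\big)\,\rd x$, which I would split into a diffusion contribution and a drift contribution and estimate each separately. Throughout, the absence of a boundary on $\mT^n$ (equivalently, the $2\pi$-periodicity of all functions) is what permits integration by parts with no boundary terms.

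For the diffusion term, one integration by parts converts $\sigma^2(2\pi)^{-n}\int_{\mT^n} f\Delta f\,\rd x$ into $-\sigma^2(2\pi)^{-n}\int_{\mT^n} |\nabla f|^2\,\rd x$. Passing to the Fourier representation (\ref{Vf}) and using $\Delta \varphi_k = -|k|^2 \varphi_k$ together with Parseval's identity $\|f\|_{\cH}^2 = \sum_{k\in\mZ^n} |f_k|^2$, this equals $-\sigma^2\sum_{k\in\mZ^n} |k|^2 |f_k|^2$. The crucial observation is a spectral-gap (Poincar\'e-type) bound: every nonzero lattice point satisfies $|k|^2 \geqslant 1$, while the zeroth mode is pinned by normalization to $f_0 = (2\pi)^{-n}$ from (\ref{f0}). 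Hence $\sum_{k} |k|^2|f_k|^2 \geqslant \sum_{k\neq 0} |f_k|^2 = \|f\|_{\cH}^2 - (2\pi)^{-2n}$, so the diffusion term is at most $-\sigma^2\|f\|_{\cH}^2 + (2\pi)^{-2n}\sigma^2$. This single step simultaneously produces the dissipative coefficient $-\sigma^2$ and the constant forcing term on the right-hand side of (\ref{diss0}).

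For the drift term $2(2\pi)^{-n}\int_{\mT^n} f\,\div(f\nabla V)\,\rd x$, I would integrate by parts once, then apply the identity $f\nabla f = \tfrac12\nabla(f^2)$ followed by a second integration by parts to move the derivative onto the potential, turning the expression into $(2\pi)^{-n}\int_{\mT^n} f^2\,\Delta V\,\rd x$. Bounding $\Delta V$ pointwise by $\|\Delta V\|_{\infty}$ (finite because $V$ is twice continuously differentiable on the compact torus) yields the upper bound $\|\Delta V\|_{\infty}\|f\|_{\cH}^2$. Adding this to the diffusion estimate gives exactly (\ref{diss0}).

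The main obstacle is not the algebra but the analytic justification of the manipulations: I must ensure that $f(t,\cdot)$ is smooth enough to differentiate under the integral sign and to integrate by parts repeatedly, and that the Fourier series in (\ref{Vf}) may be manipulated termwise. This is where the standing hypothesis of an infinitely differentiable potential $V$ is used: by the existence of a smooth fundamental solution of (\ref{SFPKE}) \cite{S_2008}, the solution $f(t,\cdot)$ is smooth with rapidly decaying Fourier coefficients, which makes the formal computations rigorous. A secondary point to verify is that the spectral-gap estimate correctly exploits the pinned mean, which rests precisely on the conservation of total probability expressed by the constancy of $f_0$ in (\ref{f0}).
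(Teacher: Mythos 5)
Your proof is correct and takes essentially the same route as the paper's: both arguments reduce $\big(\|f\|_{\cH}^2\big)^{^\centerdot}$ by integration by parts on the boundaryless torus to $\bra \Delta V, f^2\ket_{\cH} - \sigma^2\|\nabla f\|_{\cH}^2$, and your Fourier ``spectral-gap'' step with the pinned mode $f_0=(2\pi)^{-n}$ is exactly the Poincar\'e--Wirtinger inequality the paper invokes in (\ref{fff}). The only cosmetic difference is that the paper packages the integrations by parts as the pointwise identities $f\nabla V^{\rT}\nabla f = \frac{1}{2}\big(\div(f^2\nabla V)-f^2\Delta V\big)$ and $f\Delta f = \frac{1}{2}\Delta(f^2)-|\nabla f|^2$, whose divergence and Laplacian terms integrate to zero over $\mT^n$, whereas you perform the same cancellations explicitly.
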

\begin{proof}
A combination of (\ref{SFPKE}), (\ref{cG}) with the identities
$
    f\nabla V^{\rT} \nabla f
    =
    \frac{1}{2}
    \big(
        \div(f^2 \nabla V) - f^2 \Delta V
    \big)$ and
$
    f\Delta f = \frac{1}{2} \Delta(f^2) - |\nabla f|^2
$
(and the property that $f$ and $V$ are real-valued) leads to 
\begin{align}
\nonumber
    \big(\|f\|_{\cH}^2\big)^{^\centerdot}
    & =
    2
    \Bra
        f,
        \cG^{\dagger}
        (f)
    \Ket_{\cH}
    =
    2
    \Bra
        \cG(f), f
    \Ket_{\cH}\\
\nonumber
    & =
    \bra
        \Delta V, f^2
    \ket_{\cH}
    -
    \sigma^2
    \|\nabla f\|_{\cH}^2\\
\label{diss}
    & \<
    \|\Delta V\|_{\infty}
    \|f\|_{\cH}^2 - \sigma^2\|g\|_{\cH}^2.
\end{align}
Here, the first two equalities are, in fact, a particular case of (\ref{bRdot}) since the Hilbert space $\cH$ being considered employs a constant weight. Also, use is made of the Poincare-Wirtinger inequality
\begin{align}
\nonumber
    \|\nabla f\|_{\cH}^2
    & = (2\pi)^{-n}\int_{\mT^n} |\nabla f|^2\rd x
       = \sum_{k\in \mZ^n\setminus \{0\}} |f_k|^2 |k|^2\\
\label{fff}
    & \>
    \|g\|_{\cH}^2 = \|f\|_{\cH}^2 - f_0^2,
\end{align}
applied to the function $g:= f - f_0$ which satisfies $\int_{\mT^n} g \rd x = 0$ in view of (\ref{f0}). Substitution of the last equality from (\ref{fff}) into (\ref{diss}) leads to (\ref{diss0}).
\end{proof}

Another upper bound for the Renyi entropy $\bR(f\| \nu)$ is provided by the inequalities
\begin{align*}
    \frac{\re^{\bR(f\| \nu)}}{(2\pi)^n\max_{x \in \mT^n} f_*(x)}
    & \< \re^{\bR(f\| f_*)}
    \<
    \re^{\bR(f(0,\cdot)\| f_*)}\\
      & \<     \frac{\re^{\bR(f(0,\cdot)\| \nu)}}{(2\pi)^n\min_{x \in \mT^n} f_*(x)},
\end{align*}
where both denominators are finite and strictly positive due to (\ref{inv}) and the continuity  of $V$ over the torus.
The second of these inequalities follows from the dissipation relation
\begin{align}
\nonumber
\d_t \re^{\bR(f\| f_* )}
& = 2\bE_f \cG(h)
  = 2\bE_{f_*} (h\cG(h))\\
\nonumber
  & =
  \int_{\mT^n}
  \cG^{\dagger}(f_*) h^2\rd x - \sigma^2 \bE_{f_*}(|\nabla h|^2)\\
\label{bRff}
  & =- \sigma^2 \bE_{f_*}(|\nabla h|^2) \< 0,
\end{align}
whereby $\bR(f(t,\cdot)\| f_*)$ is a nonincreasing function of time $t\>0$. Here,
$h:=\frac{f}{f*} $ is the PDF of the Markov process $\xi$ with respect to the invariant measure (so that the first equality in (\ref{bRff}) is similar to (\ref{bRdot})), and use is made of a change of measure together with the relations  $\cG^{\dagger}(f_*) = 0$ and $2h\cG(h) = \cG(h^2) - \sigma^2|\nabla h|^2$, with the latter following from (\ref{cG}).
Now, (\ref{Vf}) allows the matrix elements of the generator $\cG$ over the Fourier basis (\ref{Fourier}) to be computed as
\begin{equation}
\label{cGmat}
    \bra
        \varphi_{\ell},
        \cG(\varphi_m)
    \ket_{\cH}
    =
    V_{\ell-m}
    (\ell-m)^{\rT} m - \frac{\sigma^2}{2} \delta_{\ell m} |m|^2
\end{equation}
for all $\ell, m \in \mZ^n
$.
Hence, in addition to (\ref{f0}), the Fourier coefficients of the PDF $f$ satisfy a denumerable set of ODEs
\begin{align}
\nonumber
    \mathop{f_j}^{\centerdot}
    & =
    \Bra
        \varphi_j, \cG^{\dagger}(f)
    \Ket_{\cH}
    =
    \sum_{k \in \mZ^n}
    \Bra
        \cG(\varphi_j), \varphi_k
    \Ket_{\cH}
    f_k\\
\label{freqODE}
    & =
    -j^{\rT}\sum_{k\in \mZ^n} V_k k f_{j-k}
      - \frac{\sigma^2}{2}|j|^2 f_j,
    \qquad
    j \in \mZ^n\setminus \{0\},
\end{align}
which represent the FPKE (\ref{SFPKE}) in the spatial frequency domain. Here, the convolution sum 
comes from the term $f\nabla V$ 
and is responsible for the coupling of these ODEs (the trivial case of a constant potential $V$ is not considered).
The truncation of the set of ODEs (\ref{freqODE}) (for example, in accordance with Galerkin's method for parabolic PDEs \cite{E_1998}) leads to an approximate system of equations whose solution is not necessarily nonnegative and does not correspond to a legitimate PDF on the torus. This issue can be overcome by using
the SDM approximation of PDF dynamics described in Section~\ref{sec:SDMdyn}.

\section{SDM APPROXIMATION FOR SMOLUCHOWSKI SDE}\label{sec:smolSDM}

In application to the Smoluchowski SDE (\ref{SSDE}) being considered, the SDM dynamics (\ref{SODE}) can be implemented in the vectorised form
\begin{equation}
\label{SODEvec}
    \vec(S)^{^\centerdot} = \vec(F_S^{-1}(Q(S))) - \frac{\Tr F_S^{-1}(Q(S))}{\Tr F_S^{-1}(I_N)} \vec(F_S^{-1} (I_N))
\end{equation}
using (\ref{Finvvec}), (\ref{Qvec}) together with the structure coefficients (\ref{efourier}) and the matrix elements (\ref{cGmat}) over the Fourier basis  (\ref{Fourier}). The corresponding SDM approximation (\ref{pfourier}) of the PDF $f$ in (\ref{SFPKE}) is
\begin{equation}
\label{pfour}
    p_S(x)
        =
        (2\pi)^{-n}C(S)^*\Psi(x),
\end{equation}
where 
the maps $C$ in (\ref{C}) and $\Psi: \mT^n \to \mC^L$  in (\ref{Psi}) are given by 
\begin{equation}
\label{CPsi}
    C(S)
    =
    \Big(
        \sum_{j,k\in \Lambda:\, j-k = \ell} s_{jk}
    \Big)_{\ell \in \mho},
    \qquad
    \Psi(x) = \big(\re^{i\ell^{\rT}x}\big)_{\ell\in \mho }.
\end{equation}
Here, $N= (2r+1)^n$ and $L= (4r+1)^n$ in the case when the set $\Lambda$ and the associated set $\mho$ in (\ref{mho}) are the discrete cubes
\begin{equation}
\label{sets}
    \Lambda:= ([-r,r]\bigcap \mZ)^n,
    \qquad
    \mho= ([-2r,2r]\bigcap \mZ)^n,
\end{equation}
with $r$ a positive integer. We will now provide results of a numerical experiment on the SDM approximation of PDFs for the Smoluchowski SDE (\ref{SSDE}) with $\sigma= 1$ in the two-dimensional case $n=2$. The potential $V$ was generated as a trigonometric polynomial
$
    V(x) := \sum_{k\in \mZ^2:\, |k|\< R} V_k \re^{ik^{\rT}x}
    =
    \sum_{k\in \mZ^2:\, |k|\<R} |V_k| \cos(k^{\rT} x + \phi_k)
$
of $x\in \mT^2$
with $R= 5$ (that is, $41$ independent harmonics) and  exponentially distributed random amplitudes $|V_k|= |V_{-k}|$ and initial phases $\phi_k:= \arg V_k = -\phi_{-k}$, with the latter being uniformly distributed over the interval $[0,2\pi)$ for any $k\ne 0$; see Fig.~\ref{fig:V}.
\begin{figure}[thpb]
      \centering
      \includegraphics[width=100mm, height=60mm]{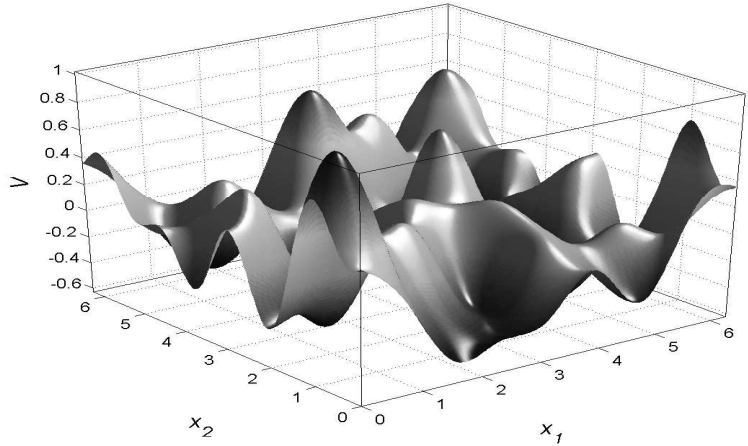}
      \caption{A multiextremum potential $V$ generated as a trigonometric polynomial of the spatial variables $0\< x_1, x_2<2\pi$ with random coefficients.}
      \label{fig:V}
   \end{figure}
 The SDM approximation (\ref{pfour}) was produced using (\ref{SODEvec}) and (\ref{CPsi}) with the sets (\ref{sets}) with $r=2$, $N=(2r+1)^2=25$, $L = (4r+1)^2 = 81$ and $\mu = 0.01$. The initial conditions for the actual PDF $f$ and the SDM $S$ were  $f(0,\cdot)=\frac{1}{4\pi^2}$ and $S(0) = \frac{1}{N}I_N$, with the latter corresponding to the same uniform distribution over the torus with the PDF  $p=\frac{1}{4\pi^2}$.
 The actual PDF $f$ was obtained through the standard finite-difference numerical solution of the FPKE on a mesh of $10^4$ points with time step $0.002$. The evolution of the relative values $\frac{D(f,p_S)}{D(f,0)}$ of the quadratic proximity criterion (\ref{D}) (with $D(f,0)$ being the squared  $\cL^2$-norm of $f$) is presented in Fig.~\ref{fig:DD} which shows that the relative error remained within $1.4\%$.
  \begin{figure}[thpb]
      \centering
      \includegraphics[width=100mm, height=60mm]{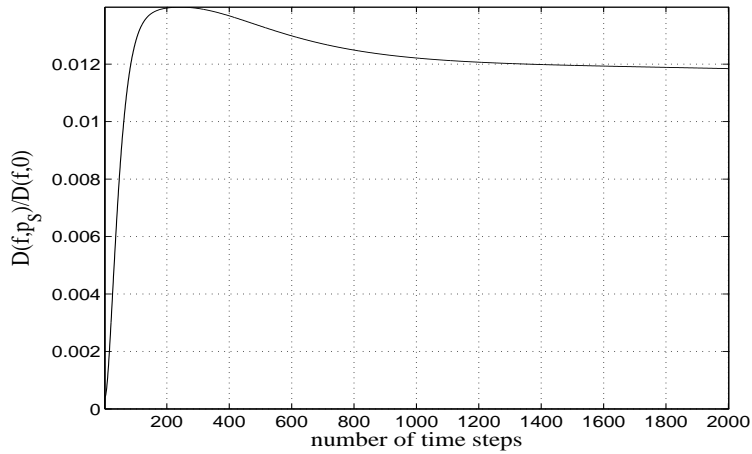}
      \caption{The relative error $\frac{D(f,p_S)}{D(f,0)}$ of the SDM approximation in the course of evolution of the actual PDF $f$.}
      \label{fig:DD}
   \end{figure}
   That the time interval captures the essential part of the movement of the system towards equilibrium is seen from Fig.~\ref{fig:fff}, whereby both the actual PDF $f$ and its SDM approximation $p_S$ at the end of simulation were close to the invariant PDF $f_*$.
  \begin{figure}[thpb]
      \centering
      \includegraphics[width=100mm, height=60mm]{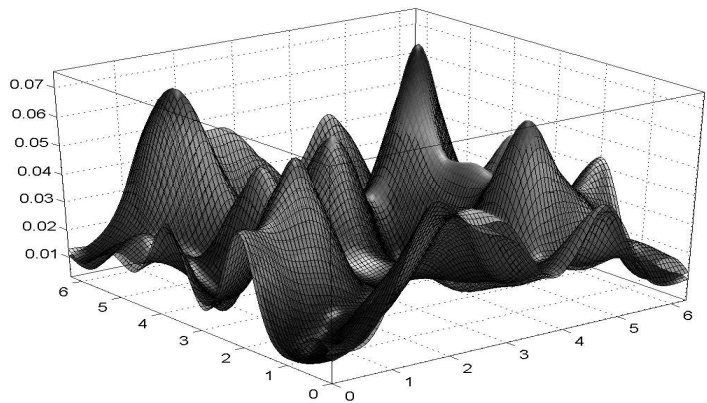}
      \caption{The invariant PDF $f_*$ (semitranslucent surface), the actual PDF $f$ (black wireframe) and its SDM approximation $p_S$ (red wireframe) after 2000 steps of evolution.}
      \label{fig:fff}
   \end{figure}

\section{CONCLUSION}\label{sec:conc}

We have outlined an approach to the approximation of PDFs by quadratic forms of weighted orthonormal basis functions parameterised by SDMs.  The SDM approximation produces a legitimate PDF which satisfies the normalization condition and is nonnegative everywhere. For orthonormal bases with an algebraic structure,  we have provided  an optimization procedure  for the SDM approximation using a quadratic criterion. The optimal SDM approximation has been extended to PDFs of Markov diffusion processes, and an  
ODE has been obtained for the SDM dynamics which yields a legitimate approximate solution of the FPKE. This has been demonstrated for the Smoluchowski SDE with a multiextremum potential on a torus. The SDM approach is also applicable (in the spirit of projective filtering \cite{VM_2005})   to the approximation of posterior PDFs governed by the Kushner-Stratonovich equations \cite{K_1964}. In this application, the conditional SDM would play a part similar to that of the covariance matrix computed in the Kalman filters. However, unlike the covariance matrices, the SDM is not restricted to the second moments and involves higher order moments of the system variables. Numerical integration of the SDM dynamics can be implemented in a square-root form using the smoothness of the Cholesky factorization of positive definite matrices \cite{S_1995}. Error analysis of the SDM approximation is another line of research to be tackled in future publications.

\end{document}